\documentclass[11pt]{amsart}
\usepackage[T1]{fontenc}
\usepackage{lmodern}
\usepackage[letterpaper,margin=1.1in]{geometry}
\usepackage{amsmath,amssymb,mathtools}
\usepackage{microtype}
\usepackage{xcolor}
\usepackage{hyperref}
\usepackage{xurl}
\hypersetup{
  colorlinks=true,
  linkcolor=blue!50!black,
  citecolor=blue!50!black,
  urlcolor=blue!50!black,
  pdftitle={Affine Coordinate Subspaces and Strong Khintchine Convergence},
  pdfauthor={Zirui Jiang}
}

\newtheorem{theorem}{Theorem}[section]
\newtheorem{corollary}[theorem]{Corollary}
\newtheorem{lemma}[theorem]{Lemma}
\newtheorem{proposition}[theorem]{Proposition}
\theoremstyle{remark}
\newtheorem{remark}[theorem]{Remark}

\newcommand{\R}{\mathbb R}
\newcommand{\Z}{\mathbb Z}
\newcommand{\N}{\mathbb N}
\newcommand{\T}{\mathbb T}
\newcommand{\ind}{\mathbf 1}
\newcommand{\dd}{\,\mathrm d}
\newcommand{\norm}[1]{\left\lVert #1\right\rVert}

\allowdisplaybreaks[1]
\title[Affine coordinate subspaces]{Affine coordinate subspaces and strong
Khintchine convergence}
\author{Zirui Jiang}
\address{Department of Mathematics and Statistics, University of Nevada, Reno,
Reno, Nevada 89557, USA}
\subjclass[2020]{Primary 11J83; Secondary 11J13}
\keywords{Metric Diophantine approximation, Khintchine's theorem, affine
subspaces, simultaneous approximation}
\date{}

\begin{document}
\begin{abstract}
We prove that every affine coordinate subspace of positive dimension and
positive codimension fails to be of strong Khintchine type for convergence.
Combining this with Huang's criterion for Khintchine-type affine subspaces
gives a sufficient Diophantine condition for an affine coordinate subspace
to be of Khintchine type but not of strong Khintchine type for convergence.
\end{abstract}
\maketitle

\section{Introduction and main results}

Throughout, $\N=\{1,2,\ldots\}$. For a vector $y\in\R^r$, write
\[
  \norm{y}:=\inf_{p\in\Z^r}|y-p|_\infty.
\]
We use the same notation for the corresponding maximum norm on $\T^r$,
where $\T=\R/\Z$. Given a function $\psi:\N\to\R_{\ge0}$, put
\[
  S_n(\psi):=
  \{y\in\R^n:\norm{qy}<\psi(q)\text{ for infinitely many }q\in\N\}.
\]
An affine subspace $L\subset\R^n$ is of \emph{Khintchine type for convergence}
if
\[
  \sum_{q=1}^{\infty}\psi(q)^n<\infty
  \quad\Longrightarrow\quad
  |S_n(\psi)\cap L|_L=0
\]
for every non-increasing $\psi$, where $|\cdot|_L$ denotes induced Lebesgue
measure on $L$. It is of \emph{strong Khintchine type for convergence} if the
same implication holds without the monotonicity assumption. We say that $L$
is of \emph{Khintchine type} if it has the convergence property and, for every
non-increasing $\psi$ with $\sum_q\psi(q)^n=\infty$, the set
$S_n(\psi)\cap L$ has full induced measure in $L$.

Huang~\cite{huang} obtained an essentially complete classification of
Khintchine-type affine subspaces and proved a sufficient condition for the
strong convergence property. In Remark~2.6 of that paper, affine coordinate
subspaces were suggested as a natural test case, and Problem~5.2 asked for
an explicit affine subspace that is of Khintchine type for convergence but
not of strong Khintchine type for convergence. 

The analogous question for divergence is whether the full-measure
conclusion holds without the monotonicity assumption.
Ram\'irez~\cite[\S2.2]{ramirez} observed that translated coordinate
hyperplanes with an irrational fixed coordinate are not of
strong Khintchine type for divergence.

Returning to the convergence case, we follow Huang's suggestion
and prove the following theorem for affine coordinate subspaces.

\begin{theorem}\label{thm:main}
Let $d,m\ge1$, put $n=d+m$, and fix $\alpha\in\R^m$. Then the affine
coordinate subspace
\[
  L_\alpha:=\{(x,\alpha):x\in\R^d\}\subset\R^n
\]
is not of strong Khintchine type for convergence. More precisely, there
exists a function $\psi:\N\to\R_{>0}$, with $\psi(q)\to0$ and
\[
  \sum_{q=1}^{\infty}\psi(q)^n<\infty,
\]
such that $S_n(\psi)\cap L_\alpha$ has full induced Lebesgue measure in
$L_\alpha$.
\end{theorem}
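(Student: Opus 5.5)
The plan is to build $\psi$ out of disjoint (or harmlessly overlapping) blocks of multiples of integers $Q_j$ for which $\norm{Q_j\alpha}$ is tiny. On such a block the $m$ fixed coordinates are approximated for free, so only the $d$ free coordinates must be handled. The cost in $\sum\psi(q)^n$ is then governed by the exponent $n$, while the measure gain is governed by the smaller exponent $d$. Concretely, I fix $\varepsilon_j:=2^{-j}$ and $K_j:=\lceil j^{2}\varepsilon_j^{-d}\rceil$. By Dirichlet's theorem applied to $\alpha$ with parameter $N_j:=\lceil 2K_j/\varepsilon_j\rceil$, I choose $Q_j\in\N$ with $\norm{Q_j\alpha}<\varepsilon_j/(2K_j)$. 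Then for every $k\in[K_j,2K_j]$ we have $\norm{kQ_j\alpha}\le k\norm{Q_j\alpha}<\varepsilon_j$. Define
\[
\psi(q):=\max\Bigl(2^{-q},\ \max\{\varepsilon_j: q=kQ_j,\ K_j\le k\le 2K_j\}\Bigr).
\]
The first term ensures $\psi>0$; the inner maximum is taken over an empty set if $q$ lies in no block. For $j\ge1$ and $m\ge1$ we get $\sum_q\psi(q)^n\le \sum_q 2^{-qn}+\sum_j (K_j+1)\varepsilon_j^{n}\ll 1+\sum_j j^2 2^{-jm}<\infty$.

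To see $\psi(q)\to0$, note that since $\norm{Q_j\alpha}\to0$, either $Q_j\to\infty$ automatically (when $\alpha\notin\mathbb Q^m$), or $\alpha$ is rational. In the rational case I instead take $Q_j$ to be a multiple of the common denominator that exceeds $j$. In both cases I may pass to a subsequence with $Q_j\to\infty$, so each $q$ lies in only finitely many blocks and the block values tend to $0$.

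For the free coordinates, let $A_j:=\{x\in\T^d:\ \exists k\in[K_j,2K_j],\ \norm{kQ_jx}<\varepsilon_j\}$. If $x\in A_j$ for infinitely many $j$, then $(x,\alpha)\in S_n(\psi)$. Since $x\mapsto Q_jx$ preserves Haar measure on $\T^d$, $|A_j|=|B_j|$, where $B_j:=\{y\in\T^d:\exists k\in[K_j,2K_j],\ \norm{ky}<\varepsilon_j\}$. So it suffices to prove $|\T^d\setminus B_j|\le\delta_j$ with $\sum_j\delta_j<\infty$. Borel--Cantelli applied to the complements then shows that almost every $x$ lies in all but finitely many $A_j$. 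Periodicity in $x$ transfers this to full measure on $L_\alpha$.

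The main obstacle is this uniform lower bound for $|B_j|$, which I would prove by a second-moment (Chebyshev/Paley--Zygmund) argument. Let $N(y):=\sum_{K\le k\le 2K}\ind\{\norm{ky}<\varepsilon\}$. Then $\int N=(K+1)(2\varepsilon)^d$. For the pair terms I use the standard one-dimensional overlap estimate
\[
|\{t\in\T:\norm{kt}<\varepsilon,\ \norm{lt}<\varepsilon\}|\le 4\varepsilon^2+2\varepsilon\,\gcd(k,l)/\max(k,l),
\]
which follows by counting the intervals in which the two conditions can overlap. Taking the product over the $d$ coordinates gives a bound for the $d$-dimensional pair measure. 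Expanding the $d$-th power, the term with $\varepsilon^{2d}$ reproduces $(\int N)^2$ up to a factor $1+O(1/K)$. A mixed term with $i\ge1$ factors $\varepsilon\gcd/K$ contributes $\ll \varepsilon^{2d-i}K^{-i}\sum_{k,l\le2K}\gcd(k,l)^i\ll \varepsilon^{2d-i}K^{2}\log K$. Relative to $(\int N)^2\asymp K^2\varepsilon^{2d}$, this is $\ll (K\varepsilon)^{-i}\log K$. Including the diagonal contribution $\int N$, we get
\[
\frac{\operatorname{Var}N}{(\int N)^2}\ll \frac{1}{K\varepsilon^d}+\frac{\log K}{K\varepsilon}+\frac1K.
\]
With $K_j\varepsilon_j^d\ge j^2$ and $K_j\varepsilon_j\ge j^2 2^{j(d-1)}$, the right-hand side is $\ll j^{-2}+j\,j^{-2}2^{-j(d-1)}$. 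This is summable when $d\ge2$. When $d=1$ I would enlarge $K_j$ to $j^3\varepsilon_j^{-1}$, which still leaves $\sum_j K_j\varepsilon_j^{n}\ll\sum_j j^3 2^{-jm}<\infty$. Chebyshev gives $|\{N=0\}|\le \operatorname{Var}N/(\int N)^2=:\delta_j$, which is summable, and this completes the argument.
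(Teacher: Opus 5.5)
Your block construction works and takes a different route from the paper's. However, one step is false as written, and the overlap estimate needs more justification than you give.

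\textbf{The false step.} You claim $\sum_{k,l\le 2K}\gcd(k,l)^i\ll K^2\log K$, but this holds only for $i=1$. For $i\ge2$, the off-diagonal pairs $(2g,3g)$ with $K/2\le g\le 2K/3$ alone contribute $\gg K^{i+1}$. So your claimed size $(K\varepsilon)^{-i}\log K$ for the $i$-th mixed term is unjustified. The repair is short. For $k\ne l$ in $[K,2K]$ we have $g=\gcd(k,l)\le |k-l|\le K$, and $[K,2K]$ contains at most $2K/g$ multiples of $g$. Hence
\[
  \sum_{\substack{k\ne l\\ K\le k,l\le 2K}}\gcd(k,l)^i\le 4K^2\sum_{g\le K}g^{i-2},
\]
which is $\ll K^2\log K$ for $i=1$ and $\ll K^{i+1}$ for $i\ge2$. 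For $i\ge2$ the mixed term relative to $(\int N)^2$ is therefore $\ll 1/(K\varepsilon^{i})\le 1/(K\varepsilon^d)$. That term already appears in your final bound, so your displayed variance inequality, your choice of $K_j$, and the $d=1$ modification all survive.

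\textbf{The overlap estimate.} The main term must be exactly $4\varepsilon^2$. With $c\varepsilon^2$ for some $c>4$, the $i=0$ part alone would exceed $(\int N)^2$ by a fixed factor, and Chebyshev would give nothing. A literal count of overlapping intervals, each bounded by the maximal overlap $2\varepsilon/\max(k,l)$, gives only a main term $4\varepsilon^2(1+\min(k,l)/\max(k,l))$. This is at least $6\varepsilon^2$ for $k,l\in[K,2K]$. The sharp form comes from writing the measure as $g\sum_{c\in\Z}f(cg/(kl))$, where $f$ is the unimodal overlap function of two intervals of lengths $2\varepsilon/k$ and $2\varepsilon/l$. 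One then bounds this lattice sum by $\frac{kl}{g}\int f+\max f$.

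\textbf{Minor simplifications.} The property $\psi(q)\to0$ already follows from $\sum_q\psi(q)^n<\infty$. Denominators from infinitely many blocks are automatically distinct, since $kQ_j\ge K_j\to\infty$. So your case distinction for rational $\alpha$ is unnecessary.

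\textbf{Comparison with the paper.} The paper uses no blocks and no arithmetic overlap estimates. It takes one lacunary sequence with $q_k>2^k(q_1+\cdots+q_{k-1})$ and $\norm{q_k\alpha}<r_k=\frac18k^{-a}$, where $1/(d+m)<a<1/d$. Thus $\sum r_k^d=\infty$ while $\sum r_k^{d+m}<\infty$. The gcd-free decorrelation bound $4ds/q$ of Lemma~\ref{lem:correlation} is proved by averaging over the translates $x+t/q$. Combined with lacunarity, it keeps the total covariance bounded. A single Chebyshev--Borel--Cantelli argument along a subsequence then yields infinitely many hits almost everywhere.

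Both proofs exploit the same dimension gap: the measure gain is governed by the exponent $d$, the cost by $d+m$. The paper's version is shorter and insensitive to gcd's and to the exact main-term constant. It also needs no separate treatment of $d=1$. Yours proves more: almost every $x$ is approximated in every sufficiently late block, with summable exceptional measure. It also uses only denominators from the structured sets $\{kQ_j\}$. The price is the sharp arithmetic overlap estimate and the gcd-sum bookkeeping above.
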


Up to a permutation of coordinates, every affine coordinate subspace of
positive dimension and positive codimension has the form appearing in
Theorem~\ref{thm:main}.

We next connect this result with Huang's classification. For
$A\in\R^{(d+1)\times m}$, define
\[
 \omega(A):=\sup\left\{w>0:
 \begin{array}{l}
 \norm{Aq^T}<|q|_\infty^{-w} \;
 \text{for infinitely many }q\in\Z^m\setminus\{0\}
 \end{array}\right\},
\]
with the supremum taken in $[0,\infty]$. The coordinate subspace $L_\alpha$ is parametrized in the
notation of~\cite{huang} by
\[
  A_\alpha=\begin{pmatrix}\alpha\\0_{d\times m}\end{pmatrix}.
\]
For every $q\in\Z^m$, its zero lower rows give
$\norm{A_\alpha q^T}=\norm{\alpha q^T}$.

\begin{corollary}\label{cor:criterion}
If $\omega(A_\alpha)<d+m$, then $L_\alpha$ is of Khintchine type, hence
of Khintchine type for convergence, but is not of strong Khintchine type
for convergence.
\end{corollary}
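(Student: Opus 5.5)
The corollary should follow by combining Theorem~\ref{thm:main} with Huang's classification, so nothing about the construction of $\psi$ needs to be redone. The negative part, that $L_\alpha$ is not of strong Khintchine type for convergence, is exactly the content of Theorem~\ref{thm:main}. That theorem holds for every $\alpha\in\R^m$ and every $d,m\ge1$, so no Diophantine hypothesis is needed for this half. The function $\psi$ it provides satisfies $\sum_q\psi(q)^n<\infty$, while $S_n(\psi)\cap L_\alpha$ has full, hence positive, induced measure. The implication defining the strong convergence property therefore fails for this non-monotone $\psi$.

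For the positive part, I would invoke Huang's criterion from~\cite{huang}. It says that an affine subspace of dimension $d$ in $\R^n$, parametrized by a matrix $A\in\R^{(d+1)\times m}$, is of Khintchine type whenever $\omega(A)<n$, where $\omega(A)$ is defined as above. The only work is to check that $L_\alpha$ fits Huang's parametrization with the matrix $A_\alpha$ displayed before the statement. In Huang's convention a point of the subspace is $(x,\tilde x A)$ with $\tilde x=(1,x)$. Here $(1,x)A_\alpha=\alpha+x\cdot 0=\alpha$, so the subspace is precisely $\{(x,\alpha)\}=L_\alpha$. The hypothesis $\omega(A_\alpha)<d+m=n$ is then literally Huang's condition, and it yields that $L_\alpha$ is of Khintchine type.

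Khintchine type includes the convergence property by the definition given in the introduction. Together with Theorem~\ref{thm:main}, this gives all three assertions of the corollary.

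The main point requiring care is matching conventions with~\cite{huang}: whether the matrix acts on row vectors $(1,x)$, which coordinates are the free ones, and whether the inequality in the criterion is $\omega(A)<n$ or something like $\omega(A)<n$ with a dimension-dependent threshold. If the threshold differs, one uses the remark that $\norm{A_\alpha q^T}=\norm{\alpha q^T}$, so $\omega(A_\alpha)$ reduces to the usual dual exponent of $\alpha$. The hypothesis can then be translated accordingly, possibly after a permutation of coordinates, which preserves both Khintchine properties.
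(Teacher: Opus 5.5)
Your proposal is correct and matches the paper's argument: the Khintchine-type assertion is taken directly from Huang's classification theorem under the strict inequality $\omega(A_\alpha)<d+m$, with your check $(1,x)A_\alpha=\alpha$ confirming the parametrization. The failure of strong convergence is exactly Theorem~\ref{thm:main}. The hedging in your last paragraph is unnecessary, since the paper applies Huang's criterion with the threshold $d+m$ itself, and the strict inequality only serves to exclude the critical exponent case.
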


\begin{proof}
Huang's classification theorem~\cite[Theorem~2.1]{huang} shows that
$\omega(A_\alpha)<d+m$ implies that $L_\alpha$ is of Khintchine type.
The strict inequality excludes the critical exponent appearing in the
hypotheses of that theorem. Failure of strong convergence follows from
Theorem~\ref{thm:main}.
\end{proof}

\begin{remark}
The condition $\omega(A_\alpha)<d+m$ in Corollary~1.2 holds
for Lebesgue-almost every $\alpha\in\mathbb R^m$, as follows
from the dimension formula of Bovey and Dodson
\cite[p.~338, (2)]{BoveyDodson1986}.\\
Moreover, Schmidt
\cite[p.~189]{Schmidt1970}
provides a class of algebraic parameters satisfying
this condition: take $\alpha=(\alpha_1,\ldots,\alpha_m)$
with real algebraic coordinates such that
$1,\alpha_1,\ldots,\alpha_m$ are linearly independent
over $\mathbb Q$.
For this choice, Schmidt's result, together with
Dirichlet's theorem, gives
\[
  \omega(A_\alpha)=m<d+m.
\]
\end{remark}

Section~\ref{sec:sparse} gives all required results before the proof of main theorem.
The proof of Theorem~\ref{thm:main} is given in Section~\ref{sec:main}. Related results for restricted denominators are known; for example,
Gallagher~\cite{gallagher} in dimension at least two and
Pollington--Velani--Zafeiropoulos--Zorin~\cite{pvzz} for lacunary sequences
and measures with logarithmic Fourier decay.

\section{Some Lemmata}\label{sec:sparse}

Fix an integer $d\ge1$, and let $\lambda_d$ denote normalized Lebesgue
measure on $\T^d$. For $q\in\N$ and $0<r<1/2$, define
\[
  E_d(q,r)=\{x\in\T^d:\norm{qx}<r\}.
\]
Multiplication by $q$ preserves $\lambda_d$, and the maximum-norm ball
of radius $r$ is a product of $d$ arcs of length $2r$. Hence
\[
  \lambda_d(E_d(q,r))=(2r)^d.
\]

The following lemma is a standard consequence of the overlap
estimate in \cite[\S3.2]{Harman1998}.
We include an elementary proof with an explicit constant for completeness.

\begin{lemma}\label{lem:correlation}
Let $q>s\ge1$ and $0<r,u<1/2$. Then
\[
 \left|\lambda_d(E_d(q,r)\cap E_d(s,u))
 -\lambda_d(E_d(q,r))\lambda_d(E_d(s,u))\right|
 \le \frac{4ds}{q}.
\]
\end{lemma}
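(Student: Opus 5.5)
Since $E_d(q,r)$ and $E_d(s,u)$ are products over coordinates, $\lambda_d$ of their intersection factors as a $d$-fold product of one-dimensional quantities. The plan is therefore to prove a one-dimensional estimate and then pass to $d$ dimensions by telescoping. Write $A_q=E_1(q,r)$ and $B_s=E_1(s,u)$. Then
\[
 \lambda_d(E_d(q,r)\cap E_d(s,u))=\lambda_1(A_q\cap B_s)^d,
 \qquad
 \lambda_d(E_d(q,r))\lambda_d(E_d(s,u))=\bigl(\lambda_1(A_q)\lambda_1(B_s)\bigr)^d .
\]
Both bases $a=\lambda_1(A_q\cap B_s)$ and $b=\lambda_1(A_q)\lambda_1(B_s)$ lie in $[0,1]$. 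Hence
\[
 |a^d-b^d|\le d|a-b|,
\]
and it suffices to show $|a-b|\le 4s/q$.

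For the one-dimensional bound, note that $B_s$ is a union of $s$ disjoint arcs in $\T$, each of length $2u/s$, centred at the points $j/s$. Likewise $A_q$ is a union of $q$ arcs of length $2r/q$, centred at the points $k/q$, repeating with period $1/q$. Fix one arc $I$ of $B_s$, of length $\ell=2u/s$.

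Cover $I$ by consecutive full periods $[k/q,(k+1)/q)$. The periods lying entirely inside $I$ each contribute exactly $(2r/q)\cdot$ (one full period) to the count. More precisely, each full period contained in $I$ contributes measure exactly $2r/q$ to $A_q\cap I$, and the proportion of $I$ it accounts for is the right one. At most two partial periods remain, one at each end of $I$. Each of these contributes an error of at most $1/q$, both to $\lambda_1(A_q\cap I)$ and to $2r\,\lambda_1(I)$. This gives
\[
 \bigl|\lambda_1(A_q\cap I)-2r\,\lambda_1(I)\bigr|\le \frac{2}{q}.
\]
Summing over the $s$ arcs of $B_s$ yields
\[
 |a-b|\le \frac{2s}{q}\le \frac{4s}{q}.
\]
This is comfortably within the target constant, and multiplying by $d$ gives the stated bound $4ds/q$.

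The step needing care is the boundary accounting. The arcs of $A_q$ straddle period boundaries; the cleanest way to handle this is to take periods centred at the points $k/q$. Wrap-around on $\T$ also needs attention. Since $u<1/2$, the arcs of $B_s$ are disjoint, and since $r<1/2$, the arcs of $A_q$ are disjoint, so the decomposition is valid. The hypothesis $q>s$ is not actually needed for the inequality: when $q\le s$, the bound $4ds/q\ge 4d$ exceeds $1$, so the estimate holds trivially.
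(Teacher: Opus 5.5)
Your proof is correct. The reduction to one dimension is exactly the paper's: the product structure, followed by $|a^d-b^d|\le d|a-b|$ on $[0,1]$. The difference lies in how you get the one-dimensional estimate.

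The paper never describes $E_1(q,r)$ or $E_1(s,u)$ as unions of arcs. Instead it averages over the shifts $x\mapsto x+t/q$ with $t\in[0,1]$. The indicator of $E_1(q,r)$ depends only on $qx$ modulo $1$, so its average over these shifts is the constant $2r$. Each such shift moves $sx$ by at most $\delta=s/q$ on $\T$. The pointwise sandwich $h_{s,u-\delta}(x)\le h_{s,u}(x+t/q)\le h_{s,u+\delta}(x)$ and Fubini then give $2r\,M(u-\delta)\le J\le 2r\,M(u+\delta)$, hence $|J-4ru|\le 4rs/q$. The clipped mass function $M$ takes care of radii $u\pm\delta$ falling outside $(0,1/2)$.

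You instead use the explicit geometry. You take $s$ disjoint arcs of length $2u/s$ and tile each by periods of length $1/q$ centred at the points $k/q$. Full periods contribute exactly $2r/q=2r\lambda_1(P)$, and each arc has at most two boundary pieces.

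Your route is more concrete and shows exactly where the error comes from. It does need the bookkeeping you flag: lifting each arc to an interval of $\R$ to handle wrap-around, and checking that each boundary piece lies in a single period. The averaging route is shorter and needs nothing about the sets beyond their dependence on $qx$ and $sx$.

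The two bounds are in fact equally strong. A boundary piece $P$ lies in one centred period, so both $\lambda_1(A_q\cap P)$ and $2r\lambda_1(P)$ lie in $[0,2r/q]$. This sharpens your per-arc error to $4r/q$ and your total to $4rs/q$, which matches the paper's intermediate bound.
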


\begin{proof}
We first work on $\T$. For a positive integer $a$ and any real $v$, set
\[
  h_{a,v}(x)=\ind_{\{\norm{ax}<v\}}.
\]
Multiplication by $a$ preserves Haar measure, so the mass of this event
is independent of $a$ and equals
\[
  M(v):=\int_\T h_{a,v}(x)\dd\lambda_1(x)
       =\max\{\min\{1,2v\},0\}.
\]
This formula also handles nonpositive radii and radii at least $1/2$.
At $v=1/2$, the omitted boundary set has measure zero.

Put
\[
  J=\int_\T h_{q,r}(x)h_{s,u}(x)\dd\lambda_1(x),
  \qquad \delta=\frac{s}{q}.
\]
For a fixed $x\in\T$, the identity $q(x+t/q)=qx+t$ on $\T$ gives
\[
  \int_0^1 h_{q,r}(x+t/q)\dd t=2r.
\]
All functions used below are bounded and measurable on finite measure
spaces. Fubini's theorem therefore implies, for every real $v$,
\[
  \int_0^1\int_\T
  h_{q,r}(x+t/q)h_{s,v}(x)\dd\lambda_1(x)\dd t
  =2r\,M(v).
\]

For $0\le t\le1$, write $z=t/q\bmod\Z$. Then $\norm{sz}\le s/q=\delta$.
The triangle inequality in both directions gives the pointwise bounds
\[
  h_{s,u-\delta}(x)\le h_{s,u}(x+z)
  \le h_{s,u+\delta}(x).
\]
Multiplying by $h_{q,r}(x+z)$, integrating in $x$, and using translation
invariance for the middle integral, we get
\begin{align*}
  \int_\T h_{q,r}(x+z)h_{s,u-\delta}(x)\dd\lambda_1(x)
  &\le J
  \le
  \int_\T h_{q,r}(x+z)h_{s,u+\delta}(x)\dd\lambda_1(x).
\end{align*}
Averaging over $t\in(0,1]$ yields
\[
  2r\,M(u-\delta)\le J\le2r\,M(u+\delta).
\]
Since $0<u<1/2$ and $\delta\ge0$, the clipped mass formula gives
\[
  2u-2\delta\le M(u-\delta),
  \qquad M(u+\delta)\le2u+2\delta.
\]
Thus
\[
  4ru-4r\delta\le J\le4ru+4r\delta,
  \qquad
  |J-4ru|\le4r\delta\le2\delta\le\frac{4s}{q}.
\]

The coordinate conditions form product sets. Consequently,
\[
  \lambda_d(E_d(q,r)\cap E_d(s,u))=J^d,\qquad
  \lambda_d(E_d(q,r))\lambda_d(E_d(s,u))=(4ru)^d.
\]
For $A,B\in[0,1]$, the standard power-difference estimate gives
\[
  |A^d-B^d|
  \le d|A-B|\max\{|A|,|B|\}^{d-1}
  \le d|A-B|,
\]
where the last inequality uses $d\ge1$ and $\max\{|A|,|B|\}\le1$.
Apply this with $A=J$ and $B=4ru$ to complete the proof.
\end{proof}

\begin{proposition}\label{prop:sparse}
Let $(q_k)_{k\ge1}$ be a strictly increasing sequence of positive integers,
and let $(r_k)_{k\ge1}\subset(0,1/2)$. Suppose that
\begin{equation}\label{eq:divergence}
  \sum_{k=1}^{\infty}r_k^d=\infty
\end{equation}
and
\begin{equation}\label{eq:sparsity}
  \sum_{k=2}^{\infty}\frac{q_1+\cdots+q_{k-1}}{q_k}<\infty.
\end{equation}
Then, for $\lambda_d$-almost every $x\in\T^d$,
\[
  \norm{q_kx}<r_k\qquad\text{for infinitely many }k.
\]
\end{proposition}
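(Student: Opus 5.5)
We apply the Chung--Erd\H{o}s (or Kochen--Stone) inequality together with Lemma~\ref{lem:correlation}, and then upgrade positive measure to full measure by a zero-one law. Write $E_k=E_d(q_k,r_k)$ and $\mu_k=\lambda_d(E_k)=(2r_k)^d$, so that \eqref{eq:divergence} gives $\sum_k\mu_k=\infty$. For $N<M$ the Chung--Erd\H{o}s inequality states that
\[
  \lambda_d\Bigl(\bigcup_{k=N}^{M}E_k\Bigr)\ge
  \frac{\bigl(\sum_{k=N}^M\mu_k\bigr)^2}{\sum_{j,k=N}^M\lambda_d(E_j\cap E_k)}.
\]
The denominator should be at most $\bigl(\sum\mu_k\bigr)^2+\sum\mu_k+C$ with $C$ independent of $N$ and $M$.

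\textbf{Correlations.} For $j<k$, Lemma~\ref{lem:correlation} with $q=q_k$ and $s=q_j$ gives
\[
  \lambda_d(E_j\cap E_k)\le\mu_j\mu_k+\frac{4dq_j}{q_k}.
\]
Summing over $j<k$, the error terms are bounded by
\[
  4d\sum_{k\ge2}\frac{q_1+\cdots+q_{k-1}}{q_k}=:C<\infty,
\]
which is finite by \eqref{eq:sparsity}. The diagonal terms contribute $\sum\mu_k$. Hence
\[
  \sum_{j,k=N}^M\lambda_d(E_j\cap E_k)\le\Bigl(\sum_{k=N}^M\mu_k\Bigr)^2+\sum_{k=N}^M\mu_k+2C.
\]
Letting $M\to\infty$ with $N$ fixed, the ratio in the Chung--Erd\H{o}s bound tends to $1$, because $\sum_{k\ge N}\mu_k=\infty$. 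Therefore $\lambda_d\bigl(\bigcup_{k\ge N}E_k\bigr)=1$ for every $N$, and so $\lambda_d(\limsup E_k)=1$.

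The step to watch is the one that makes a zero-one law unnecessary: the lemma's error is additive and summable, not multiplicative, so the quasi-independence is asymptotically exact. No Cassels or Gallagher zero-one law is needed.

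\textbf{Possible obstacle.} One might worry that Lemma~\ref{lem:correlation} requires $q>s$ strictly. This holds because $(q_k)$ is strictly increasing.

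The remaining point is to record the Chung--Erd\H{o}s inequality as a quoted or briefly proved fact: by Cauchy--Schwarz, $\bigl(\int\sum\ind_{E_k}\bigr)^2\le\lambda_d\bigl(\bigcup E_k\bigr)\int\bigl(\sum\ind_{E_k}\bigr)^2$. The conclusion "$\norm{q_kx}<r_k$ for infinitely many $k$" is exactly the statement $x\in\limsup E_k$.
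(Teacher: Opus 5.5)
Your proof is correct. Its analytic core is the same as the paper's: Lemma~\ref{lem:correlation} together with \eqref{eq:sparsity} gives the second-moment bound $\int S^2\le B^2+B+O(1)$, uniformly over blocks of indices. The paper writes this as $\int|S_N-B_N|^2\dd\lambda_d\le B_N+C$ with $C=8d\sum_k(q_1+\cdots+q_{k-1})/q_k$, which is your $2C$.

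The difference is in how this bound is turned into full measure. The paper works only with initial segments. It chooses $N_\ell$ with $B_{N_\ell}\ge\ell^2$, applies Chebyshev to get $\lambda_d(|S_{N_\ell}-B_{N_\ell}|\ge B_{N_\ell}/2)\ll\ell^{-2}$, and uses the first Borel--Cantelli lemma to conclude $S_{N_\ell}(x)>B_{N_\ell}/2$ eventually, hence $S_N(x)\to\infty$. You instead apply the Chung--Erd\H{o}s inequality to the tail blocks $E_N,\dots,E_M$ and let $M\to\infty$. This needs the additive error to be uniform in the starting index $N$, which you check correctly.

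Your route is slightly shorter, since it skips the subsequence and the Borel--Cantelli step. It is also the form that weakens naturally (Kochen--Stone) to a positive-measure statement when only $\int S^2\ll(\int S)^2$ is available. The paper's route gives a quantitative byproduct: a lower bound on the counting function along a subsequence, $S_{N_\ell}(x)>B_{N_\ell}/2$ for all large $\ell$, rather than just membership in $\limsup E_k$.
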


\begin{proof}
Set $E_k=E_d(q_k,r_k)$, $p_k=\lambda_d(E_k)$, and
\[
  S_N(x)=\sum_{k=1}^N\ind_{E_k}(x),
  \qquad B_N=\sum_{k=1}^Np_k=2^d\sum_{k=1}^Nr_k^d.
\]
By~\eqref{eq:divergence}, $B_N\to\infty$.
The finite-sum identity
\[
  \left(\sum_{k=1}^N a_k\right)^2
  =\sum_{k=1}^N a_k^2+2\sum_{1\le i<k\le N}a_i a_k
\]
follows by induction on $N$. Applying it to $a_k=\ind_{E_k}(x)$,
using $\ind_{E_k}^2=\ind_{E_k}$, and integrating gives
\[
  \int_{\T^d}S_N^2\dd\lambda_d
  =\sum_{k=1}^Np_k+
    2\sum_{1\le i<k\le N}\lambda_d(E_i\cap E_k).
\]
Subtracting $B_N^2$, we obtain the variance identity
\[
  \int_{\T^d}|S_N-B_N|^2\dd\lambda_d
  =\sum_{k=1}^N(p_k-p_k^2)
  +2\sum_{1\le i<k\le N}
    \bigl(\lambda_d(E_i\cap E_k)-p_ip_k\bigr).
\]
Since $p_k-p_k^2\le p_k$, Lemma~\ref{lem:correlation} implies
\begin{align*}
  \int_{\T^d}|S_N-B_N|^2\dd\lambda_d
  &\le B_N+8d\sum_{k=2}^N\frac{q_1+\cdots+q_{k-1}}{q_k}
  \le B_N+C,
\end{align*}
where
\[
  C=8d\sum_{k=2}^{\infty}\frac{q_1+\cdots+q_{k-1}}{q_k}<\infty.
\]
No independence of the events $E_k$ is assumed.

For every $\ell\ge1$, choose any integer $N_\ell$ with
$B_{N_\ell}\ge\ell^2$, and define
\[
  D_\ell=
  \{x:|S_{N_\ell}(x)-B_{N_\ell}|\ge B_{N_\ell}/2\}.
\]
Chebyshev's inequality gives
\[
  \lambda_d(D_\ell)
  \le\frac{B_{N_\ell}+C}{(B_{N_\ell}/2)^2}
  =\frac4{B_{N_\ell}}+\frac{4C}{B_{N_\ell}^2}
  \le\frac4{\ell^2}+\frac{4C}{\ell^4}.
\]
The right-hand side is summable. By the first Borel--Cantelli lemma,
for almost every $x$ and all sufficiently large $\ell$,
\[
  S_{N_\ell}(x)>B_{N_\ell}/2\ge\ell^2/2.
\]
Hence the sequence $S_N(x)$ is unbounded. Since it is non-decreasing in
$N$, it tends to infinity. Thus almost every $x$ belongs to infinitely
many of the sets $E_k$.
\end{proof}

\begin{lemma}\label{lem:returns}
Let $\alpha\in\R^m$. For every integer $M\ge1$ and every $\eta>0$, there
exists an integer $q>M$ such that $\norm{q\alpha}<\eta$.
\end{lemma}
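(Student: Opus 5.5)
We prove Lemma~\ref{lem:returns} by applying Dirichlet's simultaneous approximation theorem (equivalently, the pigeonhole principle on $\T^m$) to the multiples of $\alpha$.

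We first set up the pigeonhole argument. Fix $M\ge1$ and $\eta>0$, and choose an integer $K\ge1$ with $1/K<\eta$. Partition $\T^m$ into $K^m$ half-open cubes of side $1/K$. Set $N=K^m$ and consider the $N+1$ points
\[
  jM'\alpha \bmod \Z^m,\qquad j=0,1,\ldots,N,
\]
where $M'$ is a positive integer to be fixed below. Two of these points lie in the same cube, say those with indices $0\le i<j\le N$. Then
\[
  \norm{(j-i)M'\alpha}<1/K<\eta .
\]
In each coordinate the two representatives differ by less than $1/K$, so the $\infty$-distance on $\T^m$ is less than $1/K$.

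Next we choose the scaling so that the resulting integer is large. Take $M'=M+1$ and put $q=(j-i)M'$. Since $j-i\ge1$, we get $q\ge M+1>M$, and $\norm{q\alpha}<\eta$ by the display above. This completes the argument.

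No step is a real obstacle. The only point needing care is forcing $q>M$, since plain Dirichlet might return a small $q$. Multiplying by the step $M'=M+1$ handles this and also covers rational $\alpha$ uniformly. A fallback is to note that rational $\alpha$ is trivial (take $q$ a large multiple of the common denominator), and for general $\alpha$ one can apply Dirichlet with $Q$ large while avoiding $q\le M$ by the scaling trick above.
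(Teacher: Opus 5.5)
Your proof is correct. It differs from the paper's only in how the requirement $q>M$ is enforced.

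The paper quotes the infinitely-many-solutions form of Dirichlet's theorem, namely $\norm{q\alpha}\le q^{-1/m}$ for infinitely many $q$. It then picks a solution with $q>\max\{M,\eta^{-m}\}$. So largeness comes from the solution set being unbounded, and smallness comes from the decay of $q^{-1/m}$.

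You instead run one fixed-precision pigeonhole argument on the multiples of the scaled vector $(M+1)\alpha$. This makes $q$ automatically a positive multiple of $M+1$, hence larger than $M$.

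Your route is self-contained and uniform in $\alpha$. The ``infinitely many'' form of Dirichlet is usually stated for $\alpha\notin\mathbb{Q}^m$. When $\alpha\in\mathbb{Q}^m$, the finite form may return the same $q$ for every precision (e.g.\ $q=1$ when $\alpha=0$), so that case must be handled separately via multiples of the denominator. Your scaling trick avoids this case split and also gives the explicit bound $q\le(M+1)K^m$.

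The paper's route is shorter because it cites a standard theorem. It also provides the rate $\norm{q\alpha}\le q^{-1/m}$, but that rate is not needed in the proof of Theorem~\ref{thm:main}. There, only the existence of arbitrarily large $q$ with $\norm{q\alpha}<r_k$ is used.
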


\begin{proof}
By Dirichlet's theorem, there are
infinitely many positive integers $q$ such that
\[
  \norm{q\alpha}\le q^{-1/m}.
\]
These integers are unbounded, so we may choose one satisfying
\[
  q>\max\{M,\eta^{-m}\}.
\]
Then $q>M$ and
\[
  \norm{q\alpha}\le q^{-1/m}<\eta,
\]
as required.
\end{proof}

\section{Proof of the main theorem}\label{sec:main}

\begin{proof}[Proof of Theorem~\ref{thm:main}]
Choose a real number $a$ with
\[
  \frac1{d+m}<a<\frac1d,
\]
and put $r_k=\frac18k^{-a}$ for $k\ge1$.
In particular, $0<r_k\le1/8<1/2$.
Applying Lemma~\ref{lem:returns} recursively, choose $q_1>1$ and a
sequence of positive integers $q_k$ such that
\begin{equation}\label{eq:recursive}
 \begin{gathered}
  \norm{q_k\alpha}<r_k\qquad(k\ge1),\\
  q_k>2^k(q_1+\cdots+q_{k-1})\qquad(k\ge2).
 \end{gathered}
\end{equation}
This sequence is strictly increasing, and its prefix ratios satisfy
\[
  0\le\frac{q_1+\cdots+q_{k-1}}{q_k}
  \le2^{-k}\qquad(k\ge2).
\]
Clearly it satisfies
\eqref{eq:sparsity}. Since $ad<1$, the $p$-series test gives
$\sum_kr_k^d=\infty$. Proposition~\ref{prop:sparse} therefore implies
\begin{equation}\label{eq:freehits}
  \norm{q_kx}<r_k\qquad\text{for infinitely many }k
\end{equation}
for $\lambda_d$-almost every $x\in\T^d$.

Define
\[
  \psi(q)=
  \begin{cases}
    \dfrac14 k^{-a},&q=q_k\text{ for some }k\ge1,\\[4pt]
    \dfrac1{4(q+1)^2},&q\notin\{q_k:k\ge1\}.
  \end{cases}
\]
The strict increase of $q_k$ makes this definition unambiguous, and
$\psi(q)>0$ for all $q$. With $n=d+m$, we have $an>1$.
Summing the prescribed values along the injective map $k\mapsto q_k$
and bounding the remaining terms by the background series gives
\[
  \sum_{q=1}^{\infty}\psi(q)^n
  \le4^{-n}\left(
      \sum_{k=1}^{\infty}k^{-an}
      +\sum_{q=1}^{\infty}(q+1)^{-2n}
    \right)<\infty.
\]

For every $k$ occurring in~\eqref{eq:freehits}, condition~\eqref{eq:recursive}
gives
\[
  \norm{q_k(x,\alpha)}
  =\max\{\norm{q_kx},\norm{q_k\alpha}\}
  <r_k<2r_k=\psi(q_k).
\]
The denominators $q_k$ are distinct, so this yields infinitely many
simultaneous approximations for almost every free torus coordinate.

Let $\pi:\R^d\to\T^d$ be the coordinatewise quotient map,
$C_0=(0,1]^d$, and $\mathcal L^d$ Lebesgue measure on $\R^d$. Set
\[
  G=\bigcap_{K=1}^{\infty}\bigcup_{k\ge K}E_d(q_k,r_k),
  \qquad N_0=C_0\cap\pi^{-1}(\T^d\setminus G).
\]
By~\eqref{eq:freehits} and measure preservation of $\pi$ on $C_0$,
\[
  \mathcal L^d(N_0)=\lambda_d(\T^d\setminus G)=0.
\]
For $x\in\R^d$, $z\in\Z^d$, and $k\ge1$, we have $q_kz\in\Z^d$ and hence
\[
  \norm{q_k(x+z)}
  =\inf_{p\in\Z^d}|q_kx+q_kz-p|_\infty
  =\norm{q_kx}.
\]
Since $\R^d=\bigsqcup_{z\in\Z^d}(C_0+z)$ and $\Z^d$ is countable,
translation invariance of $\mathcal L^d$ gives
\begin{gather*}
  N:=\pi^{-1}(\T^d\setminus G)
    =\bigcup_{z\in\Z^d}(N_0+z),\\
  \mathcal L^d(N)
  \le\sum_{z\in\Z^d}\mathcal L^d(N_0+z)
  =\sum_{z\in\Z^d}\mathcal L^d(N_0)=0.
\end{gather*}
Writing $\iota_\alpha(x)=(x,\alpha)$, the preceding approximation gives
$L_\alpha\setminus S_n(\psi)\subseteq\iota_\alpha(N)$. Since
$\iota_\alpha$ is an isometric parametrization,
\[
  |L_\alpha\setminus S_n(\psi)|_{L_\alpha}
  \le |\iota_\alpha(N)|_{L_\alpha}
  =\mathcal L^d(N)=0.\qedhere
\]
\end{proof}

\end{document}